\definecolor{mygray}{rgb}{0.3333,0.3333,0.3333} %xfig: #555555
\newcommand{\D}{\ensuremath{\mathbbm{D}}} 
\newcommand{\E}{\ensuremath{\mathbbm{E}}} 
\newcommand{\R}{\ensuremath{\mathbbm{R}}} 
\newcommand{\N}{\ensuremath{\mathbbm{N}}} 
\newcommand{\T}{\ensuremath{\mathbbm{T}}} 
\newcommand{\IP}{\ensuremath{\mathbbm{P}}} 
\newcommand{\Z}{\ensuremath{\mathbbm{Z}}} 
\newcommand{\cB}{\ensuremath{\mathcal{B}}}
\newcommand{\cC}{\ensuremath{\mathcal{C}}}
\newcommand{\cE}{\ensuremath{\mathcal{E}}} 
\newcommand{\cF}{\ensuremath{\mathcal{F}}} 
\newcommand{\cM}{\ensuremath{\mathcal{M}}} 
\newcommand{\cS}{\ensuremath{\mathcal{S}}}
\newcommand{\bB}{\ensuremath{\mathbf{B}}}
\newcommand{\dom}{\ensuremath{\mathrm{dom}}}
\newtheorem{theorem}{Theorem}[section]
\newtheorem{lemma}[theorem]{Lemma}
\newtheorem{definition}[theorem]{Definition}
\newtheorem{assumption}[theorem]{Assumption}
\newtheorem{proposition}[theorem]{Proposition}
\newproof{proof}{Proof}
\begin{document}
\begin{frontmatter}

\title{Weak convergence of Galerkin approximations of stochastic partial differential equations driven by additive L\'evy noise}

\author[ab]{Tobias St\"uwe\corref{cor1}\fnref{fn1}}
\ead{tobias.stuewe@math.uni-stuttgart.de}

\author[ab]{Andrea Barth\fnref{fn1}}
\ead{andrea.barth@math.uni-stuttgart.de}

\cortext[cor1]{Corresponding author}
\fntext[fn1]{The research leading to these results has received
funding from the German Research Foundation (DFG) as part of the Cluster of Excellence
in Simulation Technology (EXC 310/2) at the University of Stuttgart, as well as of the Landesstiftung Baden--W\"urttemberg and 
it is gratefully acknowledged.}

\address[ab]{SimTech, University of Stuttgart, Pfaffenwaldring 5a, 70569 Stuttgart, Germany}
% \address[ts]{SimTech, University of Stuttgart, Pfaffenwaldring 5a, 70569 Stuttgart, Germany}

\begin{abstract}
This work considers weak approximations of stochastic partial differential equations (SPDEs) driven by L\'evy noise. The SPDEs at hand are parabolic with additive noise processes. A weak-convergence rate for the corresponding Galerkin approximation is derived. The convergence result is derived by use of the Malliavin derivative rather then the common approach via the Kolmogorov backward equation.
\end{abstract}

\begin{keyword}Weak convergence \sep stochastic partial differential equation \sep L\'evy noise \sep Malliavin calculus \MSC{60H15 \sep 65C30 \sep 65M60 \sep 65M70}\end{keyword}
 
% \maketitle

\end{frontmatter}

\pagestyle{myheadings}
\thispagestyle{plain}
\markboth{A. BARTH AND T. ST\"UWE}{WEAK CONVERGENCE OF SPDES WITH L\'EVY NOISE}

% \begin{AMS}60H15, 65C30, 65M60, 65M70\end{AMS}

%%%%%%%%%%%%%%%%%%%%%%%%%%%%%%%%%%%%%%%%%%%%%%%%%%%%%%%%%%%%%%%%%%%%%%%%%%%%
\section{Introduction}
\label{sec:intro}
%%%%%%%%%%%%%%%%%%%%%%%%%%%%%%%%%%%%%%%%%%%%%%%%%%%%%%%%%%%%%%%%%%%%%%%%%%%%

In contrast to partial differential equations, the error analysis of approximations of solutions to stochastic (partial) differential equations (SPDEs) allows for two conceptually different approaches: weak and strong. Both of these kinds of error analysis for SPDEs have been actively researched during the last two decades. While the strong (or pathwise) error has been the subject of a vast array of publications, the weak error, which is computed in terms of moments of the solution process, has, to the date, garnered considerably less attention.

In this paper we consider weak-convergence rates of Galerkin approximations of solutions to the parabolic stochastic partial differential equation given, for $t\in(0,T]=:\T$, by

\begin{equation}\label{eq:SPDE}
\begin{split}
&dX(t)+AX(t)dt=f(t)dt+G(t)dL(t),\\
&X(0)=x_{0}\in H.
\end{split}
\end{equation}
By $H$ we denote a separable Hilbert space, $A$ is a linear operator on $H$, $f$ maps $\T$ into $H$ and $G$ is a mapping from $\T$ into the linear, bounded operators form some separable Hilbert space $U$ (not necessarily equal to $H$) into $H$. Further, $L$ denotes a L\'evy process defined on the complete probability space $(\Omega,\cF,\IP)$  and it takes values in $U$.

For solutions to SPDEs like Equation~\eqref{eq:SPDE}, the strong-error rate of Galerkin approximations has been considered, among others, in~\cite{B10, BaLa12_2, BL12, BaLa13, DaGa01, GrKl96, GyNu97, Ha03, KoLaLi10, KoLaSa10, KoLiLa11, MuRi07}. In these references, SPDEs driven by either Gaussian or L\'evy noises are treated. Publications on weak approximations and their error analysis are, among others,~\cite{AKL15_2, AKL15, BaLa12, BaLaSc13, De11, GeKoLa09, Ha03a, KoLa12, KLS15, K14}, where, to a great extent, SPDEs driven by Gaussian processes are considered. 

In this paper, we consider an equivalent approach as in~\cite{K14} and combine it with the recent results on Malliavin calculus for L\'evy driven SPDEs in~\cite{DMvN13}. In our main result, Theorem~\ref{thm:weak}, we show that the weak-convergence rate is twice the strong-convergence rate. This is akin to the findings in~\cite{KLS15}, where a similar equation is treated and a weak-convergence result for a Galerkin approximation via the backward Kolmogorov equation is derived. Our methodology, however, differs considerably and, with it, the regularity assumptions required on the functional of the solution.

The paper is organized as follows: In Section~\ref{sec:prelim} we provide the notation and the results on Malliavin calculus for infinite dimensional L\'evy processes required for the weak-convergence result. In the third section, we introduce the stochastic partial differential equation in question, as well as its approximation. We then proceed with the proof of the main result on weak convergence of this approximation. 

%%%%%%%%%%%%%%%%%%%%%%%%%%%%%%%%%%%%%%%%%%%%%%%%%%%%%%%%%%%%%%%%%%%%%%%%%%%%
\section{Notation and preliminaries}
\label{sec:prelim}
%%%%%%%%%%%%%%%%%%%%%%%%%%%%%%%%%%%%%%%%%%%%%%%%%%%%%%%%%%%%%%%%%%%%%%%%%%%%
Let $(U,\left\langle\cdot,\cdot\right\rangle_{U})$ and $(H,\left\langle\cdot,\cdot\right\rangle_{H})$ be separable Hilbert spaces and let $L(U;H)$ be the space of all linear bounded operators from $U$ into $H$ endowed with the usual supremum norm. If $U=H$, the abbreviation $L(U):=L(U;U)$ is used. An element $G\in L(U;H)$ is said to be a nuclear operator if there exists a sequence $(x_k, k\in\N)$ in $H$ and a sequence $(y_k, k\in\N)$ in $U$ such that 
\begin{equation*}
\sum_{k\in\N} \left\|x_k\right\|_{H} \left\|y_k\right\|_{U} < +\infty
\end{equation*} 
and $G$ has, for $\in U$ the representation
\begin{equation*}
Gz = \sum_{k\in\N} \left\langle z, y_k \right\rangle_{U} x_k.%,\qquad z\in U.
\end{equation*}
The space of all nuclear operators from $U$ into $H$, endowed with the norm 
\begin{equation*}
\left\|G\right\|_{L_{N}(U;H)} := \inf\left\{\sum_{k\in\N} \left\|x_k\right\|_{H} \left\|y_k\right\|_{U}\;|\;Gz = \sum_{k=1}^{\infty} \left\langle z, y_k \right\rangle_{U} x_k\right\}
\end{equation*}
is a Banach space, and is denoted by $L_{N}(U;H)$. If $U=H$, we use the abbreviation $L_{N}(U)$. Furthermore, let $L_{N}^{+}(U)$ denote the space of all nonnegative, symmetric, nuclear operators on $U$, i.e.,
\begin{equation*}
L_{N}^{+}(U) := \left\{G\in L_{N}(U) | \left\langle Gy,y\right\rangle_{U} \geq 0,\;\left\langle Gy,z\right\rangle_{U} = \left\langle y,Gz\right\rangle_{U}\,\text{ for all } y,z\in U\right\}.
\end{equation*}
An operator $G\in L(U;H)$ is called a Hilbert-Schmidt operator if
\begin{equation*}
\left\|G\right\|_{L_{HS}(U;H)}^{2} := \sum_{k=1}^{\infty}\left\|Ge_k\right\|_{H}^{2} < +\infty
\end{equation*}
for any orthonormal basis $(e_k, k\in\N)$ of $U$. The space of all Hilbert-Schmidt operators $(L_{HS}(U;H),\left\|\cdot\right\|_{L_{HS}(U;H)})$ is a Hilbert space with inner product given by
\begin{equation*}
\left\langle G,\tilde{G}\right\rangle_{L_{HS}(U;H)} := \sum_{k=1}^{\infty}\left\langle Ge_k , \tilde{G}e_{k}\right\rangle_{H},
\end{equation*}
for $G,\;\tilde{G}\in L_{HS}(U;H)$ and any orthonormal basis $(e_k, k\in\N)$ of $U$. If $U=H$, the abbreviation $L_{HS}(U):=L_{HS}(U;U)$ is used.\\
Given a measure space $(S,\cS,\mu)$ and $r\in[1,+\infty)$, we denote by $L^r(S;H)$ the space of all $\cS\text{-}\mathcal{B}(H)$-measurable mappings $f:S\rightarrow H$ with finite norm 
\begin{equation*}
 \left\|f\right\|_{L^r(S;H)} := \left(\int_{S}\left\|f\right\|_{H}^{r}\;d\mu\right)^{\frac{1}{r}},
\end{equation*}
where $\mathcal{B}(H)$ denotes the Borel $\sigma$-algebra over $H$.

We consider stochastic processes on the time interval $\T:=[0,T]$, with $0<T<+\infty$, defined on a filtered probability space $(\Omega,\cF,(\cF_{t},t\in\T),\IP)$ satisfying the usual conditions. We denote by $\cM_{\T}^{2}(H)$ the space of all $H$-valued, c\`adl\`ag, square integrable martingales. The space $\cM_{\T}^{2}(H)$ equipped with the norm $\left\|\cdot\right\|_{\cM_{\T}^{2}(H)}$, which is defined by
\begin{equation*}
\left\|Y\right\|_{\cM_{\T}^{2}(H)} := \sup_{t\in\T}\left(\E\left[\left\|Y(t)\right\|_{H}^{2}\right]\right)^{\frac{1}{2}} = \left(\E\left[\left\|Y(T)\right\|_{H}^{2}\right]\right)^{\frac{1}{2}}
\end{equation*}
for $Y\in\cM_{\T}^{2}(H)$, is a Banach space.
%%%
\subsection{Stochastic integration with respect to compensated Poisson random measures}
%%%

Let %$(\Omega,\mathcal{F},\mathbb{P})$ be a complete probability space, and 
$(S,\Sigma, \nu)$ be a $\sigma$-finite measure space. We introduce the notation $\bar{\Z}_{+} := \Z_{+}\cup\{+\infty\}$. We, further, work on the measure space $(S\times \T, \Sigma\otimes\cB(\T)) := (S_\T,\Sigma_\T)$ and denote by $\lambda$ the Lebesgue measure on $(\T,\cB(\T))$. 
%%%
\begin{definition}\label{def:PRM}
A Poisson random measure on $(S_\T,\Sigma_\T)$ with intensity measure $\mu:=\nu\otimes\lambda$ is a mapping $p:\Omega\times\Sigma_\T\rightarrow\bar{\mathbb{Z}}_{+}$ such that:
\begin{enumerate}
\item For all $\omega\in\Omega$, the mapping $B\mapsto p(\omega,B)$ is a measure,
\item For all $B\in\Sigma_\T$, the mapping $p(B): \omega \mapsto p(\omega,B)$ is a Poisson distributed random variable with parameter $\mu(B)$,
\item For any pairwise disjoint $B_1,...,B_M\in\Sigma_\T$, $M\in\N$, the random variables $p(B_1), ...,p(B_M)$ are independent.
%\att{\ab{What is $M$?}}
\end{enumerate}
For $B\in\Sigma_\T$ with $\mu(B)<\infty$ we write
\begin{equation*}
q(B) := p(B) - \mu(B)
\end{equation*}
and call $q$ the compensated Poisson random measure associated to $p$.
\end{definition}

%%%
% Set $\T:=[0,T]$, for some fixed $0<T<\infty$. For the remainder of this section, let $(H,\left\langle\cdot,\cdot\right\rangle_{H})$ be a separable Hilbert space, $(Z,\mathcal{Z},\nu)$ a $\sigma$-finite measure space and $N$ a Prm on 
% \begin{equation*}
% (E,\Sigma) = (\T\times Z, \mathcal{B}(\T)\otimes \mathcal{Z})
% \end{equation*}
% with intensity measure $\mu = \lambda \otimes \nu$, where $\lambda$ denotes the Lebesgue measure on $(\T,\mathcal{B}(\T))$.
%\att{\ab{I guess $\lambda$ is the Lebesque measure?}}
We assume that the underlying probability space is equipped with the filtration $\cF = (\cF_{t},t \in \T)$ generated by $q$, i.e.,
\begin{equation*} 
\cF_{t} := \sigma(q((r,s]\times A) | 0\leq r<s\leq t, A \in \Sigma, \nu(A) < \infty).
\end{equation*}
With slight abuse of notation, we write $q(t,A) := q((0,t]\times A)$, for $t \in \T$ and $A \in \Sigma$ with $\nu(A) < \infty$. Defined like this we have that for all $s,t \in \T$ with $s < t$, the increment $q(t,A) - q(s,A)$ is independent of $\cF_{s}$ and that $(q(t,A),t\in \T)$ is a square integrable $\cF$-martingale.

%%%
As it is common, we start to define the stochastic integral with respect to a compensated Poisson random measure, by considering elementary processes.
\begin{definition}
An $H$-valued stochastic process $\Phi : \Omega \times \T \times S \rightarrow H$ is said to be elementary if there exists some finite partition of $\T$, given by $0 = t_0 < ... < t_N = T$, for some $N \in \N$, and for every $n = 0, ..., N-1$ there exist pairwise disjoint sets $A_{1}^{n}, ..., A_{M_n}^{n} \in \Sigma$ of finite $\nu$-measure, such that
\begin{equation}\label{elem}
\Phi = \sum_{n=0}^{N-1}\sum_{m=1}^{M_n} \Phi_{m}^{n}\mathbf{1}_{(t_{n},t_{n+1}] \times A_{m}^n}
\end{equation}
where $\Phi_{m}^{n} \in L^{2}(\Omega;H)$ is $\cF_{t_n}$-measurable, for $m=1,...,M_n$, $n=0,...,N-1$. The class of all elementary processes is denoted by $\cE$.

For $\Phi \in \mathcal{E}$ we define the stochastic integral with respect to the compensated Poisson random measure $q$, for $t \in \T$, by
\begin{align*}
I(\Phi)(t) &:= \int_{(0,t]}\int_{S} \Phi(s,z) \;q(ds,dz)\\
&:= \sum_{n=0}^{N-1}\sum_{m=1}^{M_n} \Phi_{m}^{n}\bigl(q(t_{n+1}\wedge t,A_{m}^n) - q(t_{n}\wedge t,A_{m}^n)\bigr).
\end{align*}
\end{definition}
%%%
For every $\Phi \in \cE$, $I(\Phi) = (I(\Phi)(t),t\in\T)$ is a c\`adl\`ag, square integrable $H$-valued $\cF$-martingale, i.e., $I(\Phi) \in \cM_{\T}^{2}(H)$.

We endow the class of all elementary processes $\cE$ with the seminorm
\begin{equation*}
\|\Phi\|_{S_\T}^2 := \E\int_{0}^{T}\int_{S}\|\Phi(s,z)\|_{H}^2\;\nu(dz) ds.
\end{equation*}
To define a norm on $\cE$, we identify $\Phi$ with $\Psi$ if $\|\Phi - \Psi\|_{S_\T} = 0$. Then, the stochastic integral $I$ is an isometric mapping from $(\cE,\|\cdot\|_{S_\T})$ to $(\cM_{\T}^{2}(H),\|\cdot\|_{\cM_{\T}^{2}(H)})$, i.e., for $\Phi \in \cE$,
\begin{equation*}
\|I(\Phi)\|_{\cM_{\T}^{2}(H)} = \|\Phi\|_{S_\T}.
\end{equation*}
Let $\overline{\mathcal{E}}^{S_{\T}}$ be the completion of $(\cE,\|\cdot\|_{S_\T})$. It is clear that there is a unique isometric extension of $I$ to $\overline{\mathcal{E}}^{S_{\T}}$. This broader class of integrands is denoted by $\mathcal{N}_{q}^{2}(S_\T;H)$ and can be characterized by
\begin{equation*}
\mathcal{N}_{q}^{2}(S_\T;H) := L^{2}(\Omega\times\T\times S,\mathcal{P}_{\T}(S),\IP\otimes\lambda\otimes\nu;H),
\end{equation*}
where $\mathcal{P}_{\T}(S)$ denotes the $\sigma$-algebra of predictable sets in $\Omega\times\T\times S$, i.e.,
\begin{align*}
\mathcal{P}_{\T}(S) := \sigma\bigl(\{F_{s}\times (s,t]&\times A | 0\leq s<t \leq T, F_{s}\in\cF_{s}, A\in\Sigma\}\\
&\cup \{F_{0}\times\left\{0\right\}\times A | F_{0}\in\cF_{0}, A\in\Sigma\}\bigr).
\end{align*}
%%%
\subsection{The Malliavin derivative}
%%%
With the stochastic integral in hand, we shall outline the notion of the Malliavin derivative as introduced in \cite[Section 5]{DMvN13}. Given a Poisson random measure $p$ on $(S_\T,\Sigma_\T)$ with intensity measure $\mu = \nu\times\lambda$, it will be convenient to introduce the following notation. We write
% \begin{equation*}
% \Sigma_{\mu} := \{B \in \Sigma | \mu(B) < \infty\}.
% \end{equation*}
for a tuple $\bB = (B_{1},...,B_{M})$, $B_{1},...,B_{M}\in\Sigma_\T$, $M\in\N$,
\begin{equation*}
p(\bB):=(p(B_1),...,p(B_M)),
\end{equation*}
and denote by $e_{m}$ the $m$-th unit vector in $\R^M$.

% We assume that $\cF$ is generated by $q:=p-\mu$, i.e.,
% \begin{equation*}
% \mathcal{F} = \sigma(q(B) | B \in \Sigma_{\mu}).
% \end{equation*}
% \att{\ab{Was the filtration generated by $q$ not already introduced on the first page?}}
%%%
\begin{definition}
An $H$-valued random variable $F:\Omega\rightarrow H$ is called cylindrical if it has the form
\begin{equation}\label{cyl}
F = \sum_{i=1}^{n}f_{i}(p(B_1),...,p(B_M))h_{i},
\end{equation}
where $B_{m}\in\Sigma_\T$, with $\mu(B_M)<+\infty$ for $m=1,...,M$, $f_{i}:\Z_{+}^{M}\rightarrow\R$ and $h_i\in H$ for $i=1,...,n$, for some $M,n\in\N$. The collection of all $H$-valued cylindrical random variables is denoted by $\cC(\Omega;H)$.
\end{definition}

In the sequel, we always assume that the sets $B_{1},...,B_{M}$ used in the representation of $F\in\cC(\Omega;H)$ are pairwise disjoint. 
%%%
\begin{definition}
For a cylindrical random variable $F\in\cC(\Omega;H)$, the Malliavin derivative $DF\in L^{2}(\Omega\times S_\T;H)$ is defined by
\begin{equation*}
DF := \sum_{i=1}^{n}\sum_{m=1}^{M}\left(f_{i}(p(\mathbf{B})+e_{m})-f_{i}(p(\mathbf{B}))\right)\mathbf{1}_{B_m}h_{i}.
\end{equation*} 
\end{definition}
Note that the expression of the derivative does not depend on the particular representation of $F\in\cC(\Omega;H)$. The proof of the following proposition can be found in~\cite[Theorem 5.6]{DMvN13}.

\begin{proposition}
The operator $D:\cC(\Omega;H)\subset L^2(\Omega;H)\rightarrow L^{2}(\Omega\times S_\T;H)$ is closable.
\end{proposition}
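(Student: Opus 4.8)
The plan is to establish closability through a duality (integration-by-parts) formula, exhibiting a dense family of test integrands against which $D$ admits an adjoint. I would first recall the abstract criterion that a densely defined operator between Hilbert spaces is closable as soon as its adjoint has dense domain; since the cylindrical variables $\cC(\Omega;H)$ are dense in $L^2(\Omega;H)$ (the sets $B\in\Sigma_\T$ of finite $\mu$-measure generate $\cF$, so bounded functions of the $p(\bB)$ are total), it suffices to produce a dense set $\mathcal U\subseteq L^2(\Omega\times S_\T;H)$ such that for each $u\in\mathcal U$ the functional $F\mapsto\Langle DF,u\Rangle_{L^2(\Omega\times S_\T;H)}$ is continuous in the $L^2(\Omega;H)$-norm. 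Equivalently, and this is the form I would present, I would show directly that $F_n\to 0$ in $L^2(\Omega;H)$ together with $DF_n\to\eta$ in $L^2(\Omega\times S_\T;H)$ forces $\eta=0$, by testing $\eta$ against the family $\mathcal U$.

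The analytic core is a finite-dimensional Poisson integration-by-parts identity. Writing $N:=p(B_m)$, which is Poisson distributed with parameter $\lambda:=\mu(B_m)$ and independent of the remaining coordinates of $p(\bB)$, the relation $\E[Ng(N)]=\lambda\E[g(N+1)]$ yields, for real functions $a,b$,
\[
\lambda\,\E\bigl[(a(N+1)-a(N))\,b(N)\bigr]=\E\bigl[a(N)\,(N\,b(N-1)-\lambda\,b(N))\bigr].
\]
Applying this coordinate-by-coordinate (using the independence of $p(B_1),\dots,p(B_M)$) to a cylindrical $F=\sum_i f_i(p(\bB))h_i$ and to a test integrand of the product form $u=g(p(\bB))\,\mathbf 1_{B_m}\,c$, with $g$ bounded and finitely supported, $c\in H$, and $B_m$ one of the pairwise disjoint sets of $\bB$, collapses the $S_\T$-integral onto the single atom $B_m$ and produces
\[
\Langle DF,u\Rangle_{L^2(\Omega\times S_\T;H)}=\Langle F,\delta(u)\Rangle_{L^2(\Omega;H)},\qquad \delta(u):=\bigl(p(B_m)\,g(p(\bB)-e_m)-\mu(B_m)\,g(p(\bB))\bigr)c,
\]
and $\delta(u)\in L^2(\Omega;H)$ because it is a bounded function of finitely many Poisson variables times a fixed vector. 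By linearity this extends to all finite sums of such products.

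It then remains to check that the span $\mathcal U$ of these products is dense in $L^2(\Omega\times S_\T;H)$. This I would do in two standard steps: simple tensors $G\,\mathbf 1_{B}\,c$, with $G\in L^2(\Omega)$, $B\in\Sigma_\T$ of finite measure and $c\in H$, are total in $L^2(\Omega\times S_\T;H)$; and, refining partitions so that $B$ becomes a disjoint union of atoms of the tuple defining $G$, one may take $G=g(p(\bB))$ bounded, these being dense in $L^2(\Omega)$ since $\cF$ is generated by the $p(\bB)$. Granting density, the direct argument closes: for $u\in\mathcal U$ one has $\Langle\eta,u\Rangle=\lim_n\Langle DF_n,u\Rangle=\lim_n\Langle F_n,\delta(u)\Rangle=0$, whence $\eta=0$.

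I expect the main obstacle to be the diagonal bookkeeping in the integration-by-parts step, namely the case where the support $B_m$ of the test integrand overlaps the sets on which $F$ genuinely depends; this is exactly where the shift $g(p(\bB)-e_m)$ and the compensating term $-\mu(B_m)g$ appear, and where care is needed to align all functionals to a common refined family of pairwise disjoint finite-measure sets before applying the one-dimensional identity. A secondary technical point is ensuring the integrability needed for $\delta(u)\in L^2(\Omega;H)$, which is why I would restrict the test class to bounded, finitely supported $g$; this restriction is harmless for density and hence for the conclusion.
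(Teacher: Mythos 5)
Your proof is correct. The paper does not actually reproduce a proof of this proposition --- it defers to \cite[Theorem 5.6]{DMvN13} --- and the argument there is the same standard duality one you present: a Poisson integration-by-parts identity on cylindrical functionals, tested against a dense class of product-form integrands $g(p(\bB))\mathbf 1_{B_m}c$, shows that the adjoint of $D$ is densely defined, hence $D$ is closable. Your one-dimensional identity checks out (the term $g(p(\bB)-e_m)$ is undefined only on $\{p(B_m)=0\}$, where it is annihilated by the factor $p(B_m)$, and $\delta(u)\in L^2(\Omega;H)$ since $p(B_m)$ has moments of all orders), and the refinement-to-a-common-disjoint-family step you flag is exactly the bookkeeping the cited proof also has to perform.
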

% \begin{proof}
% Let $(F_n, n\in\N)$ be a sequence in $\cC(\Omega;H)$ such that $F_n\rightarrow 0$ in $L^2(\Omega;H)$ and $DF_n\rightarrow G$ in $L^{2}(\Omega\times E;H)$ as $n\rightarrow +\infty$. In order to prove that $D$ is closable we have to show that $G=0$.
% 
% Therefore, let $B\in\Sigma_\T$, with $\mu(B)<+\infty$ and $h\in H$ be fixed. By \cite[Proposition 5.5]{DMvN13} and the fact that $q(B)\in L^2(\Omega)$, we have
% \begin{align*}
% \E\int_{B}\left\langle G,h\right\rangle_{H}\;d\mu &= \lim_{n\rightarrow\infty}\E\int_{B}\langle DF_n,h\rangle_{H}\;d\mu\\
% &= \lim_{n\rightarrow +\infty}\E\bigl(q(B)\langle F_n,h\rangle_{H}\bigr) = 0.
% \end{align*}
% Since $B$ and $h$ were arbitrarily chosen, we deduce that $G=0$. 
% \end{proof}

By abuse of notation, we let $D$ stand for the closure of $D:\cC(\Omega;H)\subset L^2(\Omega;H)\rightarrow L^{2}(\Omega\times S_\T;H)$. We denote by $\D^{1,2}(\Omega;H)$ the domain of this closure which is a Banach space endowed with the norm
\begin{equation*}
\left\|F\right\|_{\D^{1,2}(\Omega;H)}:=\left(\left\|F\right\|_{L^2(\Omega;H)}^2 + \left\|DF\right\|_{L^{2}(\Omega\times S_\T;H)}^2\right)^{\frac{1}{2}},
\end{equation*} 
for $F\in\D^{1,2}(\Omega;H)$
%%%
%\att{\ab{What is $\tilde{H}$ in the following?}}
\begin{proposition}
Let $\phi:H\rightarrow\tilde{H}$ be Lipschitz-continuous, where $(\tilde{H},\left\langle \cdot,\cdot\right\rangle_{\tilde{H}})$ is an arbitrary separable Hilbert space, and let $F\in\D^{1,2}(\Omega;H)$. Then $\phi(F)\in\mathbb{D}^{1,2}(\Omega;\tilde{H})$ with derivative
\begin{equation}\label{eq:chain}
D\phi(F)=\phi(F+DF)-\phi(F).
\end{equation}
\end{proposition}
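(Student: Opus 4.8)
The plan is to establish the identity first for cylindrical $F$ by a direct computation with the difference operator, and then to extend it to all of $\D^{1,2}(\Omega;H)$ by exploiting that $D$ is closed. The estimate that drives everything is the pointwise Lipschitz bound $\Norm{\phi(F+DF)-\phi(F)}_{\tilde{H}}\le \mathrm{Lip}(\phi)\Norm{DF}_{H}$, which shows that the proposed derivative $\phi(F+DF)-\phi(F)$ belongs to $L^2(\Omega\times S_\T;\tilde{H})$ as soon as $DF\in L^2(\Omega\times S_\T;H)$. Note that $\phi(F)$ alone is \emph{not} square-integrable over $\Omega\times S_\T$ when $\nu(S)=+\infty$, so this cancellation is essential.

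First I would take $F=\sum_{i=1}^n f_i(p(\mathbf{B}))h_i\in\cC(\Omega;H)$ with the sets $B_1,\dots,B_M$ pairwise disjoint. Because of disjointness, $\mathbf{1}_{B_{m'}}$ vanishes on $B_m$ for $m'\neq m$, so $F+DF=\sum_i f_i(p(\mathbf{B})+e_m)h_i$ on $B_m$ and $F+DF=F$ off $\bigcup_m B_m$; consequently $\phi(F+DF)-\phi(F)$ is supported on $\bigcup_m B_m$. If $\tilde{H}$ is finite-dimensional, $\phi(F)$ is itself cylindrical and applying the definition of $D$ term by term yields exactly $D\phi(F)=\phi(F+DF)-\phi(F)$. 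For general $\tilde{H}$ I would fix an orthonormal basis $(\tilde e_j)_{j\in\N}$ with associated finite-rank projections $P_J$, observe that $P_J\phi(F)=\sum_{j=1}^J\langle\phi(F),\tilde e_j\rangle_{\tilde{H}}\,\tilde e_j$ is cylindrical and that $P_J\circ\phi$ is Lipschitz, so the finite-dimensional case gives $D(P_J\phi(F))=P_J\bigl(\phi(F+DF)-\phi(F)\bigr)$. Letting $J\to\infty$, dominated convergence (with dominating functions $\Norm{\phi(F)}_{\tilde{H}}$ and $\mathrm{Lip}(\phi)\Norm{DF}_H$ respectively) shows $P_J\phi(F)\to\phi(F)$ and $D(P_J\phi(F))\to\phi(F+DF)-\phi(F)$ in the respective $L^2$ spaces, whence closedness of $D$ delivers the claim for cylindrical $F$.

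For a general $F\in\D^{1,2}(\Omega;H)$ I would choose cylindrical $F_n\to F$ with $DF_n\to DF$, i.e. convergence in the graph norm. Then $\phi(F_n)\to\phi(F)$ in $L^2(\Omega;\tilde{H})$ is immediate from Lipschitz continuity, and by the previous step $D\phi(F_n)=\phi(F_n+DF_n)-\phi(F_n)$, so by closedness it suffices to prove that these converge to $\phi(F+DF)-\phi(F)$ in $L^2(\Omega\times S_\T;\tilde{H})$. This is the main obstacle: the obvious Lipschitz estimate produces a term $\mathrm{Lip}(\phi)\Norm{F_n-F}_H$ that is constant in the $S_\T$-variable and hence not controllable by $\mu(S_\T)$ when $\nu(S)=+\infty$. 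I would circumvent this by passing to a subsequence along which $F_n\to F$ and $DF_n\to DF$ almost everywhere; then continuity of $\phi$ gives almost-everywhere convergence of the integrands, while the domination $\Norm{\phi(F_n+DF_n)-\phi(F_n)}_{\tilde{H}}\le\mathrm{Lip}(\phi)\Norm{DF_n}_H$ together with $\Norm{DF_n}_{L^2(\Omega\times S_\T;H)}\to\Norm{DF}_{L^2(\Omega\times S_\T;H)}$ yields, via a generalized dominated convergence theorem of Pratt type, convergence of the $L^2$-norms. Almost-everywhere convergence plus convergence of norms in $L^2$ (a Brezis--Lieb/Riesz argument) then upgrades to strong $L^2$-convergence; since every subsequence has a further subsequence with the same limit, the full sequence converges, and closedness of $D$ finishes the proof.
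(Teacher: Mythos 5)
Your proposal is correct and follows essentially the same route as the paper: reduce to cylindrical $F$ via finite-rank projections onto the span of an orthonormal basis of $\tilde{H}$ (your $P_J\phi$ is the paper's $\phi_\ell$), compute $D$ directly there using disjointness of the $B_m$, pass to the limit by dominated convergence and closedness of $D$, and then handle general $F\in\D^{1,2}(\Omega;H)$ by approximating with cylindrical random variables, extracting an a.e.\ convergent subsequence, and invoking a generalized dominated convergence argument with the dominating functions $C\Norm{DF_n}_H$. Your explicit remark that $\phi(F)$ itself need not lie in $L^2(\Omega\times S_\T;\tilde{H})$ when $\nu(S)=+\infty$, so that only the cancellation $\phi(F+DF)-\phi(F)$ is controllable, correctly identifies the point that forces the subsequence/Pratt-type argument the paper also uses.
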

\begin{proof}
First, we assume that $F\in\cC(\Omega;H)$. %In this case, $F$ has a representation of the form \eqref{cyl}. 
Consider the sequence $(\phi_\ell, \ell\in\N)$ of functions $\phi_{\ell}:H\rightarrow\tilde{H}$ defined by
\begin{equation*}
\phi_{\ell}(h):=\sum_{k=1}^{\ell}\left\langle\phi(h),\tilde{h}_k\right\rangle_{\tilde{H}}\tilde{h}_{k},\quad h\in H,%\;n\in\N,
\end{equation*}
where $(\tilde{h}_{k}, k\in\N)$ denotes an arbitrary orthonormal basis of $\tilde{H}$. Now, for every $\ell\in\N$ we get
\begin{equation*}
\phi_{\ell}(F)=\sum_{k=1}^{\ell}g_{k}(p(B_1),...,p(B_M))\tilde{h}_{k},
\end{equation*}
where the functions $g_{k}:\Z_{+}^{M}\rightarrow\R$, $k\in\N$, are given by
\begin{equation*}
g_{k}(\mathbf{m}):=\langle\phi(\sum_{i=1}^{n}f_{i}(\mathbf{m})h_i),\tilde{h}_{k}\rangle_{\tilde{H}},\quad \mathbf{m}=(m_{1},...,m_{M})\in\Z_{+}^M.
\end{equation*}
From this follows that, for all $\ell\in\N$, $\phi_{\ell}(F)\in\cC(\Omega;\tilde{H})$ with derivative
\begin{align*}
D\phi_{\ell}(F)&=\sum_{k=1}^{\ell}\sum_{m=1}^{M}\left(g_{k}(p(\bB)+e_m)-g_{k}(p(\bB))\right)\mathbf{1}_{B_m}\tilde{h}_{k}\\
&=\sum_{m=1}^{M}\mathbf{1}_{B_m}\left(\phi_{\ell}(\sum_{i=1}^{n}f_{i}(p(\mathbf{B})+e_m)h_i) - \phi_{\ell}(\sum_{i=1}^{n}f_{i}(p(\mathbf{B}))h_i)\right)\\
&=\phi_{\ell}(F+DF)-\phi_{\ell}(F).
\end{align*}
If we prove that, for $\ell\rightarrow\infty$,
\begin{equation}\label{eq:convergence}
\begin{split}
&\phi_{\ell}(F)\rightarrow\phi(F)\;\text{in}\;L^{2}(\Omega;\tilde{H})\;\text{and}\\
&D\phi_{\ell}(F)\rightarrow\phi(F+DF)-\phi(F)\;\text{in}\;L^{2}(\Omega\times S_\T;\tilde{H}),
\end{split}
\end{equation}
Equation~\eqref{eq:chain} follows for $\phi$, by the closedness of $D$. By the definition of $(\phi_\ell,\ell\in\N)$ we have, for every $h\in H$, $\phi_{\ell}(h)\rightarrow\phi(h)$ as $\ell\rightarrow\infty$. This clearly forces convergence a.e. in Equation~\eqref{eq:convergence}. If we can find dominating functions the desired result follows from Lebesgue's dominated convergence theorem. Indeed, we have
\begin{equation*}
\left\|\phi_{\ell}(F)-\phi(F)\right\|_{\tilde{H}}\leq\left\|\phi(F)\right\|_{\tilde{H}}\leq C\left(1+\left\|F\right\|_{H}\right),
\end{equation*}
where we used the Lipschitz-property of $\phi$ and that, for all $x\in H,\;\ell\in\N$,
\begin{equation*}
\left\|\phi_{\ell}(x)-\phi(x)\right\|_{\tilde{H}}\leq\left\|\phi(x)\right\|_{\tilde{H}}.
\end{equation*}
Further, we have
\begin{align*}
&\left\|D\phi_{\ell}(F)-(\phi(F+DF)-\phi(F))\right\|_{\tilde{H}}\\
&\qquad\leq\left\|\phi_{\ell}(F+DF)-\phi_{\ell}(F)\right\|_{\tilde{H}}+\left\|\phi(F+DF)-\phi(F)\right\|_{\tilde{H}}\\
&\qquad\leq 2\left\|\phi(F+DF)-\phi(F)\right\|_{\tilde{H}}\\
&\qquad\leq C\left\|DF\right\|_{H},
\end{align*}
where we used that, for all $x,\;y\in H $ and $\ell\in\N$,
\begin{equation*}
\left\|\phi_{\ell}(x)-\phi_{\ell}(y)\right\|_{\tilde{H}}\leq\left\|\phi(x)-\phi(y)\right\|_{\tilde{H}}.
\end{equation*}

For $F\in\mathbb{D}^{1,2}(\Omega;H)$ arbitrary, we take a sequence $(F_{k},k\in\N)\subset\mathcal{C}(\Omega;H)$ such that
\begin{equation*}
F_k\rightarrow F\;\text{in}\;\mathbb{D}^{1,2}(\Omega;H)\;\text{as}\;k\rightarrow\infty.
\end{equation*}
Then, by the Lipschitz-continuity of $\phi$, it holds that
\begin{equation*}
\left\|\phi(F_k)-\phi(F)\right\|_{L^{2}(\Omega;\tilde{H})}\leq C\left\|F_{k}-F\right\|_{L^{2}(\Omega;H)},
\end{equation*}  
where the right hand side tends to zero for $k\rightarrow\infty$. By the closedness of $D$, the proof is completed by showing that 
\begin{equation*}\label{conv}
D\phi(F_k)\rightarrow\phi(F+DF)-\phi(F)\;\text{in}\;L^{2}(\Omega\times S_\T;\tilde{H})\;\text{as}\;k\rightarrow\infty.
\end{equation*}
For this purpose, we assume, by possibly considering a suitable subsequence of $(F_k,k\in\N)$, that $F_{k}\rightarrow F$ $\IP$-a.e. and $DF_{k}\rightarrow DF$ $\IP\otimes\mu$-a.e. as $k\rightarrow\infty$. Since $\phi$ is continuous, we obtain convergence $\IP\otimes\mu$-a.e..
By the above and the Lipschitz-continuity of $\phi$, we have
\begin{align*}
\|D\phi(F_k) &- (\phi(F+DF)-\phi(F))\|_{\tilde{H}}\\
&\leq \|\phi(F_{k}+DF_{k})-\phi(F_k)\|_{\tilde{H}} + \|\phi(F+DF)-\phi(F) \|_{\tilde{H}}\\
&\leq C\big(\|DF_k\|_{H} + \|DF\|_{H}\big),
\end{align*}
where the right-hand side converges in $L^{2}(\Omega\times S_\T)$. The result follows by the application of a generalized version of Lebesgue's dominated convergence theorem.    
\end{proof}
%%%

Thanks to~\cite[Lemma 5.7]{DMvN13}, the operator $D$ is densely defined. Therefore the following definition makes sense. 
%%%
\begin{definition}
The divergence operator
\begin{equation*}
\delta:\dom(\delta)\subset L^{2}(\Omega\times S_\T;H)\rightarrow L^2(\Omega;H)
\end{equation*}
is defined to be the adjoint of
\begin{equation*}
D:\D^{1,2}(\Omega;H)\subset L^2(\Omega;H)\rightarrow L^{2}(\Omega\times S_\T;H).
\end{equation*}
\end{definition}
From the definition, it is clear that, for all $F\in\D^{1,2}(\Omega;H)$ and $\Phi\in \dom(\delta)$,
\begin{equation}\label{adj}
\E\int_{S_\T}\left\langle DF,\Phi\right\rangle\;d\mu = \E\left[\left\langle F,\delta(\Phi)\right\rangle\right].
\end{equation}
%%%
\begin{lemma}
For every $\phi\in L^{2}(S_\T;H)$ it holds that $\delta(\phi)\in\D^{1,2}(\Omega;H)$ and $D(\delta(\phi)) = \phi$.
\end{lemma}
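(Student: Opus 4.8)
The plan is to show that $\delta$, restricted to deterministic integrands, is nothing but the stochastic integral $I$, and then to read off both assertions from the cylindrical computation. Since $\phi$ is deterministic it is in particular predictable, so $I(\phi)$ is well defined and the isometry gives $\Norm{I(\phi)}_{L^2(\Omega;H)}=\Norm{\phi}_{L^2(S_\T;H)}$. Deterministic elementary integrands $\phi=\sum_{j=1}^{M}g_j\mathbf{1}_{B_j}$, with pairwise disjoint $B_j\in\Sigma_\T$ of finite $\mu$-measure and $g_j\in H$, are dense in $L^2(S_\T;H)$; so I would establish both claims for such $\phi$ first and then pass to the limit.

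For such $\phi$, the integral $I(\phi)=\sum_{j=1}^{M}g_j\,q(B_j)=\sum_{j=1}^{M}(p(B_j)-\mu(B_j))\,g_j$ is itself cylindrical, of the form $\sum_j f_j(p(\bB))g_j$ with the affine functions $f_j(\mathbf{m})=m_j-\mu(B_j)$. Feeding this into the definition of $D$ and using that the finite difference $f_j(\mathbf{m}+e_k)-f_j(\mathbf{m})$ equals $1$ when $j=k$ and $0$ otherwise collapses the double sum to $D I(\phi)=\sum_{j=1}^M\mathbf{1}_{B_j}g_j=\phi$. In particular $I(\phi)\in\D^{1,2}(\Omega;H)$ with $D I(\phi)=\phi$.

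To see that $\phi\in\dom(\delta)$ with $\delta(\phi)=I(\phi)$, I would verify the duality $\E[\langle F,I(\phi)\rangle]=\E\int_{S_\T}\langle DF,\phi\rangle\,d\mu$ for all $F\in\D^{1,2}(\Omega;H)$. Both sides are continuous in $F$ for the $\D^{1,2}$-norm (Cauchy--Schwarz on each side, together with $\Norm{I(\phi)}_{L^2(\Omega;H)}<\infty$), so it suffices to treat cylindrical $F=\sum_i f_i(p(\mathbf{C}))h_i$. Passing to the finite algebra generated by the sets defining $F$ and $\phi$, I can rewrite both in terms of a common family of pairwise disjoint finite-measure sets $D_1,\dots,D_K$, so that $p(D_1),\dots,p(D_K)$ are independent Poisson variables; the identity then factorises and reduces coordinatewise to the Poisson integration-by-parts relation $\E[N g(N)]=\lambda\,\E[g(N+1)]$ for $N\sim\mathrm{Poisson}(\lambda)$, equivalently $\E[g(N)(N-\lambda)]=\lambda\,\E[g(N+1)-g(N)]$, which is exactly the finite-difference structure of $DF$ paired against $q(D_k)=p(D_k)-\mu(D_k)$. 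This duality --- and in particular organising the common refinement so that disjointness collapses the sums correctly --- is the step I expect to be the main obstacle; the underlying one-dimensional Poisson identity is itself elementary.

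Finally, for general $\phi\in L^2(S_\T;H)$ I would take elementary $\phi_k\to\phi$ in $L^2(S_\T;H)$. By the isometry $\delta(\phi_k)=I(\phi_k)\to I(\phi)$ in $L^2(\Omega;H)$, while $D\delta(\phi_k)=\phi_k\to\phi$ in $L^2(\Omega\times S_\T;H)$, the two norms coinciding on deterministic functions. Closedness of $D$ gives $I(\phi)\in\D^{1,2}(\Omega;H)$ with $D I(\phi)=\phi$, and closedness of the adjoint $\delta$ gives $\phi\in\dom(\delta)$ with $\delta(\phi)=I(\phi)$. Together these yield $\delta(\phi)\in\D^{1,2}(\Omega;H)$ and $D(\delta(\phi))=\phi$, as claimed.
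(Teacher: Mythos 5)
Your proposal is correct and follows essentially the same route as the paper: establish the claim for simple deterministic integrands via the explicit finite-difference computation $f_j(\mathbf{m}+e_k)-f_j(\mathbf{m})=\delta_{jk}$, which collapses $D\delta(\phi)$ to $\phi$, and then pass to general $\phi\in L^{2}(S_\T;H)$ by density, the isometry $\Norm{\delta(\phi_\ell)}_{L^2(\Omega;H)}=\Norm{\phi_\ell}_{L^2(S_\T;H)}$, and the closedness of both $D$ and $\delta$. The only difference is that where the paper cites \cite[Lemma 5.9]{DMvN13} for the fact that simple deterministic $\phi$ lie in $\dom(\delta)$ with $\delta(\phi)=\sum_j q(B_j)g_j$, you re-derive this via the one-dimensional Poisson integration-by-parts identity on a common disjoint refinement, which is a valid (and self-contained) substitute for that citation.
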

\begin{proof}
Suppose first that $\phi$ is a simple function
\begin{equation*}
\phi = \sum_{i=1}^{M}\mathbf{1}_{B_{i}}h_{i},
\end{equation*}
where $B_{i}\in\Sigma_{\mu}$ are pairwise disjoint sets and $h_{i}\in H$. By~\cite[Lemma 5.9]{DMvN13}, we have $\phi\in \dom(\delta)$ and
\begin{equation*}
\delta(\phi) = \sum_{i=1}^{M}q(B_i)h_{i} = \sum_{i=1}^{M}f_{i}(p(\mathbf{B}))h_{i},
\end{equation*}
where 
\begin{equation*}
f_{i}(\mathbf{n}) := n_{i}-\mu(B_i),\quad \mathbf{n}:=(n_{1},...,n_{M})\in\mathbb{Z}_{+}^{M}. 
\end{equation*}
Thus, we get $\delta(\phi)\in\mathcal{C}(\Omega;H)$ and
\begin{align*}
D\delta(\phi) &= \sum_{i=1}^{M}\sum_{m=1}^{M}\left(f_{i}(p(\mathbf{B})+e_{m})-f_{i}(p(\mathbf{B}))\right)\mathbf{1}_{B_m}h_{i}\\
&= \sum_{i=1}^{M}\left(f_{i}(p(\mathbf{B})+e_{i})-f_{i}(p(\mathbf{B}))\right)\mathbf{1}_{B_i}h_{i} = \sum_{i=1}^{M}\mathbf{1}_{B_{i}}h_{i} = \phi.
\end{align*}

Now, let $\phi\in L^{2}(S_\T;H)$ be arbitrary. We may choose a sequence $\left(\phi_\ell,\ell\in\N\right)$ of simple functions converging to $\phi$ in $L^{2}(S_\T;H)$. Since $\left\|\delta(\phi_\ell)\right\|_{L^{2}(\Omega;H)} = \left\|\phi_\ell\right\|_{L^{2}(S_\T;H)}$, for $\ell\in\N$, the sequence $\left(\delta(\phi_\ell),\ell\in\N\right)$ converges in $L^{2}(\Omega;H)$. By the closedness of $\delta$, we see that $\phi\in \dom(\delta)$ and $\delta(\phi_{\ell}) \rightarrow \delta(\phi)$ as $\ell\rightarrow\infty$. Furthermore, we have
\begin{equation*}
D\delta(\phi_\ell) = \phi_{\ell} \rightarrow \phi\; \text{in}\; L^{2}(\Omega\times S_\T;H)
\end{equation*}
The assertion follows by the fact that $D$ is closed. 
\end{proof}

We return to the situation where $p$ is a Poisson random measure on $(S_\T,\Sigma_\T)$
with intensity measure $\mu=\lambda\otimes\nu$. As before we consider the filtration generated by the corresponding compensated measure $q$.  
%%%
\begin{proposition}\label{prop:skorohod}
Let $\Phi\in\mathcal{N}_{q}^{2}(S_\T;H)$ be a predictable stochastic process. Then $\Phi\in \dom(\delta)$ and
\begin{equation}\label{eq:skorohod}
\delta(\Phi) = I(\Phi)(T).
\end{equation} 
\end{proposition}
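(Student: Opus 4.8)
The plan is to prove that the Skorohod integral $\delta$ coincides with the Itô-type stochastic integral $I(\cdot)(T)$ on predictable processes by first establishing the identity on the dense class of elementary processes $\cE$ and then extending by a density/closedness argument. The duality relation \req{adj} is the natural tool: to show $\Phi\in\dom(\delta)$ with $\delta(\Phi)=I(\Phi)(T)$, it suffices to verify that for every cylindrical $F\in\cC(\Omega;H)$ (or every $F\in\D^{1,2}(\Omega;H)$) one has
\begin{equation*}
\E\int_{S_\T}\Langle DF,\Phi\Rangle\dx[\mu] = \E\bigl[\Langle F, I(\Phi)(T)\Rangle\bigr].
\end{equation*}

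First I would take $\Phi\in\cE$ of the elementary form \req{elem}, so that $\Phi=\sum_{n,m}\Phi_m^n\mathbf 1_{(t_n,t_{n+1}]\times A_m^n}$ with $\Phi_m^n$ being $\cF_{t_n}$-measurable, and $I(\Phi)(T)=\sum_{n,m}\Phi_m^n\,q((t_n,t_{n+1}]\times A_m^n)$. Writing $B_m^n:=(t_n,t_{n+1}]\times A_m^n$, the divergence of each summand should factor as $\delta(\Phi_m^n\mathbf 1_{B_m^n})=\Phi_m^n\,q(B_m^n)$, which one verifies via \req{adj}: by the previous lemma, $\delta$ acts on the deterministic simple function $\mathbf 1_{B_m^n}h$ by $\delta(\mathbf 1_{B_m^n}h)=q(B_m^n)h$, and the predictability (the $\cF_{t_n}$-measurability of the coefficient against an integrator supported on the strictly later interval $(t_n,t_{n+1}]$) is exactly what lets the $\cF_{t_n}$-measurable factor $\Phi_m^n$ pass through the divergence. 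Concretely, one expands $DF$ against $\Phi$, uses that $D$ adds a unit mass $e_m$ on the set $B_m^n$, and exploits the independence of the increment $q(B_m^n)$ from $\cF_{t_n}$ together with $\E[q(B_m^n)]=0$ to match both sides of the duality. This establishes \req{eq:skorohod} for $\Phi\in\cE$, and simultaneously the isometry $\Norm{\delta(\Phi)}_{L^2(\Omega;H)}=\Norm{\Phi}_{S_\T}$ (which already holds for $I$).

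For general predictable $\Phi\in\cN_q^2(S_\T;H)$, I would choose a sequence $(\Phi_\ell,\ell\in\N)\subset\cE$ with $\Phi_\ell\to\Phi$ in $\cN_q^2(S_\T;H)=L^2(\Omega\times S_\T,\cP_\T(S),\IP\otimes\mu;H)$, which is possible precisely because $\cE$ is dense in this completed space. By the isometry just proved, $(\delta(\Phi_\ell),\ell\in\N)$ is Cauchy and hence convergent in $L^2(\Omega;H)$, while by the isometry for $I$ we have $I(\Phi_\ell)(T)\to I(\Phi)(T)$ in $\cM_\T^2(H)$, hence in $L^2(\Omega;H)$. Since $\delta$ is closed (being the adjoint of a densely defined operator) and $\delta(\Phi_\ell)=I(\Phi_\ell)(T)$ converges with $\Phi_\ell\to\Phi$, closedness gives $\Phi\in\dom(\delta)$ and $\delta(\Phi)=\lim_\ell\delta(\Phi_\ell)=I(\Phi)(T)$, which is \req{eq:skorohod}.

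The main obstacle I anticipate is the elementary-process computation itself, specifically the careful handling of the difference operator $D$ acting on a cylindrical $F$ that shares its generating sets with $\Phi$'s partition: one must organize the sum so that the Malliavin increment $f_i(p(\bB)+e_m)-f_i(p(\bB))$ pairs correctly with the indicator $\mathbf 1_{B_m^n}$ and so that the predictability hypothesis genuinely kills the anticipating contributions, leaving exactly the Itô integral and no extra trace term. This is where the hypothesis that $\Phi$ is \emph{predictable} is essential — for a general (non-adapted) integrand the divergence would differ from $I(\Phi)(T)$ by correction terms — so the crux is verifying that, under predictability, the adjoint identity \req{adj} reduces cleanly to the martingale increment structure of $q$.
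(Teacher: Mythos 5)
Your proposal is correct and follows essentially the same route as the paper: establish the identity on the elementary class $\cE$ and then extend to general predictable $\Phi\in\mathcal{N}_{q}^{2}(S_\T;H)$ by density of $\cE$ and closedness of $\delta$. The only difference is in the base case, where the paper simply invokes Lemma~5.9 of \cite{DMvN13} (first for cylindrical coefficients $\Phi_{m}^{n}\in\cC(\Omega;H)$, then for general $L^{2}$ coefficients via an extra closedness step), whereas you re-derive that lemma by verifying the duality relation \req{adj} directly; your observation that predictability is what prevents an anticipating trace term is precisely the content of the cited lemma.
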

\begin{proof}
Suppose first that $\Phi\in\cE$ is an elementary process as given in Equation~\eqref{elem} with $\Phi_{m}^{n}\in\cC(\Omega;H)$, which are $\cF_{t_n}$-measurable. As $\delta$ is linear we have, by~\cite[Lemma 5.9]{DMvN13},
\begin{equation*}
\delta(\Phi) = \sum_{n=0}^{N-1}\sum_{m=1}^{M_n}q((t_{n},t_{n+1}]\times A_{m}^{n})\Phi_{m}^{n}=I(\Phi)(T).
\end{equation*} 
Since $\cC(\Omega;H)$ is dense in $L^2(\Omega;H)$, the closedness of $\delta$ shows that $\cE$ is contained in $\dom(\delta)$ and Equation~\eqref{eq:skorohod} holds. Finally, since $\cE$ is dense in $\mathcal{N}_{q}^{2}(S_\T;H)$, applying the closedness of $\delta$ again, the assertion follows.
\end{proof}

With the Malliavin derivative in hand we are able to prove the main result on weak convergence.
%%%
\section{Weak error estimate for the parabolic SPDEs}
%%%
Assume $L=(L(t),t\in\T)$ is a L\'evy process in a real separable Hilbert space $(U,\left\langle\cdot , \cdot\right\rangle_{U})$ defined on a filtered probability space $(\Omega,\cF,(\cF_{t},t\in\T),\IP)$ satisfying the usual conditions. We assume that for $s, t\in\T$ with $s<t$ the increment $L(t)-L(s)$ is independent of $\cF_{s}$. We assume that $L$ is square integrable and of mean zero. It follows that $L$ is a martingale with respect to $(\cF_{t},t\in\T)$. It is well-known that $L$ is square integrable if and only if its L\'evy measure $\nu$ satisfies
\begin{equation}\label{eq:levymeas}
\int_{U}\left\|u\right\|_{U}^{2}\;\nu(du) < +\infty.
\end{equation}
Moreover, we assume that the Gaussian part of $L$ vanishes. We always consider a c\`adl\`ag modification of $L$ and define the jump process of $L$ by $\Delta L(t):=L(t)-L(t-)$, for $t\in\T$. Let $Q\in L_{N}^{+}(U)$ be the covariance operator of $L$, which is determined by the L\'evy measure $\nu$ via
\begin{equation*}
\left\langle Qx,z\right\rangle_{U} = \int_{U}\left\langle x,u\right\rangle_{U}\left\langle z,u\right\rangle_{U}\;\nu(du), \qquad x,z\in U.
\end{equation*}
We introduce the space $U_{0}:=Q^{\frac{1}{2}}(U)$ which endowed with the inner product
\begin{equation*}
\left\langle x,z\right\rangle_{U_0} := \left\langle Q^{-\frac{1}{2}}x,Q^{-\frac{1}{2}}z\right\rangle_{U},\qquad x, z \in U_{0},
\end{equation*}
becomes a separable Hilbert space, called the reproducing kernel Hilbert space of $L$. Here, $Q^{-\frac{1}{2}}$ denotes the pseudo-inverse of $Q^{\frac{1}{2}}$. Since $Q$ is nuclear, $Q^{\frac{1}{2}}$ is Hilbert-Schmidt.
Consequently, the embedding $U_{0}\hookrightarrow U$ is Hilbert-Schmidt, i.e., for arbitrary orthonormal basis $\left(e_{i},i\in\N\right)$ of $U_0$ one has
\begin{equation*}
\sum_{i\in\N}\left\|e_i\right\|_{U}^{2} < +\infty.
\end{equation*}

Setting
\begin{equation*}
p:=\sum_{0<s\leq T}\mathbf{1}_{\left\{\Delta L(s)\neq 0\right\}}\delta_{(s,\Delta L(s))}
\end{equation*}
defines a Poisson random measure on $(\T\times U,\cB(\T)\otimes\cB(U))$ with intensity measure $\lambda\otimes\nu$. The associated compensated measure is denoted by
\begin{equation*}
q:=p-\lambda\otimes\nu.
\end{equation*}

In order to make the results of the previous section applicable, especially Proposition~\ref{prop:skorohod}, we assume that $\cF$ and the filtration $(\cF_{t},t\in\T)$ are generated by $q$.

Combining~\cite[Lemma A.2]{KLS15}, Proposition~\ref{prop:skorohod} with $(S,\Sigma)=(U,\cB(U))$ and Equation~\eqref{adj} we obtain
\begin{proposition}\label{prop:4.1}
If $\Phi\in L^{2}(\Omega\times\T,\mathcal{P}_{\T};L_{HS}(U_{0};H))$ and $F\in\mathbb{D}^{1,2}(\Omega;H)$, then
\begin{equation*}
\E\left[\left\langle F,\int_{0}^{T}\Phi(s)\;dL(s)\right\rangle_{H}\right] = \E\int_{0}^{T}\int_{U}\left\langle [DF](s,u),\Phi(s)u\right\rangle_{H}\;\nu(du)ds.
\end{equation*} 
\end{proposition}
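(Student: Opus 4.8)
The plan is to reduce the statement to the duality identity \eqref{adj} between the Malliavin derivative $D$ and its adjoint $\delta$, once both the L\'evy stochastic integral and the divergence have been rewritten in terms of the compensated Poisson random measure $q$ with $(S,\Sigma)=(U,\cB(U))$.

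First I would introduce the integrand $\tilde\Phi:\Omega\times\T\times U\to H$ defined by $\tilde\Phi(s,u):=\Phi(s)u$ and check that it is an admissible integrand for $q$, i.e. that $\tilde\Phi$ is predictable and lies in $\mathcal{N}_{q}^{2}(S_\T;H)$. Predictability is inherited from that of $\Phi$. For the square-integrability I would invoke the pointwise isometry
\begin{equation*}
\int_{U}\|\Phi(s)u\|_{H}^{2}\,\nu(du)=\|\Phi(s)\|_{L_{HS}(U_{0};H)}^{2},
\end{equation*}
which is exactly the content of \cite[Lemma A.2]{KLS15} and follows from the definition of the covariance operator $Q$ and of the inner product on $U_{0}=Q^{\frac{1}{2}}(U)$. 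Integrating over $\Omega\times\T$ gives $\|\tilde\Phi\|_{S_\T}=\|\Phi\|_{L^{2}(\Omega\times\T;L_{HS}(U_{0};H))}<+\infty$, so $\tilde\Phi\in\mathcal{N}_{q}^{2}(S_\T;H)$. The same lemma also identifies the two stochastic integrals, namely $\int_{0}^{T}\Phi(s)\,dL(s)=\int_{(0,T]}\int_{U}\Phi(s)u\,q(ds,du)=I(\tilde\Phi)(T)$.

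Next, since $\tilde\Phi$ is a predictable element of $\mathcal{N}_{q}^{2}(S_\T;H)$, Proposition~\ref{prop:skorohod} applies and yields $\tilde\Phi\in\dom(\delta)$ together with $\delta(\tilde\Phi)=I(\tilde\Phi)(T)=\int_{0}^{T}\Phi(s)\,dL(s)$. With $F\in\D^{1,2}(\Omega;H)$ and $\tilde\Phi\in\dom(\delta)$ in hand, the duality relation \eqref{adj} gives
\begin{equation*}
\E\Big[\big\langle F,\textstyle\int_{0}^{T}\Phi(s)\,dL(s)\big\rangle_{H}\Big]=\E\big[\langle F,\delta(\tilde\Phi)\rangle_{H}\big]=\E\int_{S_\T}\langle DF,\tilde\Phi\rangle_{H}\,d\mu,
\end{equation*}
and unwinding $\mu=\nu\otimes\lambda$ together with $\tilde\Phi(s,u)=\Phi(s)u$ turns the right-hand side into $\E\int_{0}^{T}\int_{U}\langle[DF](s,u),\Phi(s)u\rangle_{H}\,\nu(du)\,ds$, which is the claim.

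The only genuine obstacle is the first step: checking that $\tilde\Phi(s,u)=\Phi(s)u$ is well defined $\nu$-a.e., that it lands in $\mathcal{N}_{q}^{2}(S_\T;H)$, and that the L\'evy integral $\int_0^T\Phi(s)\,dL(s)$ coincides with the $q$-integral $I(\tilde\Phi)(T)$. All three rest on the isometry between $L_{HS}(U_{0};H)$ and the $\|\cdot\|_{S_\T}$-seminorm, which is precisely why \cite[Lemma A.2]{KLS15} is the required bridge; once it is in place, Proposition~\ref{prop:skorohod} and \eqref{adj} assemble the result immediately.
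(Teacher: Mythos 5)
Your proposal is correct and follows exactly the route the paper takes: the paper's entire ``proof'' is the one-line remark that the proposition follows by combining \cite[Lemma A.2]{KLS15} (to identify $\int_0^T\Phi(s)\,dL(s)$ with the $q$-integral of $(s,u)\mapsto\Phi(s)u$ and to verify square-integrability via the $L_{HS}(U_0;H)$ isometry), Proposition~\ref{prop:skorohod} with $(S,\Sigma)=(U,\cB(U))$, and the duality relation~\eqref{adj}. You have simply spelled out the details that the paper leaves implicit; nothing is missing or different.
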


Our main objective is to prove weak convergence of the mild solution $X:\Omega\times\T\rightarrow H$ to the stochastic evolution equation
\begin{equation}\label{eq:dX}
\begin{split}
&dX(t)+AX(t)dt=f(t)dt+G(t)dL(t)\\
&X(0)=x_{0}\in H
\end{split}
\end{equation}
We make the assumption:
%%%
\begin{assumption}
The data of the stochastic evolution equation~\eqref{eq:dX} satisfy:
\begin{enumerate}
\item The linear operator $A:\mathcal{D}(A)\subset H\rightarrow H$ is densely defined, self-adjoint, positive-definite and has a compact inverse.
\item The functions $f:\T\rightarrow H$ and $G:\T\rightarrow L(U;H)$ are measurable and bounded.
\end{enumerate}
\end{assumption}
%%%
Under these conditions, $-A$ is the generator of an analytic semigroup of contractions $(S(t), t\in\T)$. The mild solution is then given by the variation of constants formula
%%%
\begin{equation}\label{eq:mild}
X(t)=S(t)x_{0}+\int_{0}^{t}S(t-s)f(s)\;ds+\int_{0}^{t}S(t-s)G(s)\;dL(s).
\end{equation}

Furthermore, by the assumption there exists an nondecreasing sequence $(\lambda_{k}, k\in\N)$ of positive real numbers, which tends to $\infty$, and an orthonormal basis $(e_{k}, k\in\N)$ of $H$ such that $Ae_k = \lambda_{k}e_{k}$. This enables us to define fractional powers of the operator $A$ in the following way. 
  
For $s\geq 0$, $A^{\frac{s}{2}}:\mathcal{D}(A^{\frac{s}{2}})\subset H\rightarrow H$ is given by
\begin{equation*}
A^{\frac{s}{2}}x := \sum_{k=1}^{\infty}\lambda_{k}^{\frac{s}{2}}\left\langle x,e_{k}\right\rangle e_{k},
\end{equation*}  
for all $x\in\mathcal{D}(A^{\frac{s}{2}})$, where
\begin{equation*}
\mathcal{D}(A^{\frac{s}{2}}):=\Big\{x\in H | \left\|x\right\|_{s}^{2} := \sum_{k=1}^{\infty}\lambda_{k}^{s}\left\langle x, e_{k}\right\rangle^{2} < \infty\Big\}.
\end{equation*}
Then $\dot{H}^{s}:=\mathcal{D}(A^{\frac{s}{2}})$ endowed with the norm $\left\|\cdot\right\|_{s}$ becomes a Hilbert space. We may alternatively express the norm $\left\|\cdot\right\|_{s}$ as
\begin{equation*}
\left\|x\right\|_{s} = \left\|A^{\frac{s}{2}}x\right\|_{H},\qquad \text{for all}\;x\in\dot{H}^{s}.
\end{equation*} 

Let $(V_{h},h\in (0,1])$ be a family of finite dimensional subspaces of $\dot{H}^{1}$. Unless otherwise stated, we endow $V_{h}$ with the norm in $H$. By $P_{h}:H\rightarrow V_{h}$ and $R_{h}:\dot{H}^{1}\rightarrow V_{h}$ we denote the orthogonal projections with respect to the inner products in $H$ and $\dot{H}^{1}$, respectively. We assume that the Ritz projection $R_{h}$ satisfies the estimate
\begin{equation}\label{eq:Ritz}
\left\|R_{h}x-x\right\|_{H}\leq Ch^{\beta}\left\|x\right\|_{\beta}, \qquad x\in \dot{H}^{\beta}, \beta\in\{1,2\}, h\in (0,1].
\end{equation} 
%%%
Discrete versions $A_{h}:V_{h}\rightarrow V_{h}$ of the operator $A$ are then defined in the following way: For $x\in V_h$ we define $A_{h}x$ to be the unique element in $V_h$ for which
\begin{equation*}
\left\langle x,y\right\rangle_{\dot{H}^{1}} = \left\langle A_{h}x,y\right\rangle \text{ for all}\; y\in V_{h}.
\end{equation*}
Obviously, $A_h$ is self-adjoint and positive definite on $V_h$. Hence, $-A_{h}$ is the generator of an analytic semigroup of contractions on $V_h$, which is denoted by $S_{h}(t):=e^{-tA_{h}}$, for $t\in\T$. In what follows, we use the abbreviation 
\begin{equation}\label{eq:Fh}
F_{h}(t):=S_{h}(t)P_{h}-S(t), \quad t\geq 0.
\end{equation}
%%%
Given such a family of finite element spaces $V_{h}\subset \dot{H}^{1}$, we define, for $h\in (0,1]$, an approximation $(X_{h}(t), t\in\T)$ of the solution $(X(t),t\in\T)$ to be the mild solution to
\begin{equation}\label{eq:dXh}
\begin{split}
&dX_{h}(t)+A_{h}X_{h}(t)dt=P_{h}f(t)dt+P_{h}G(t)dL(t)\\
&X_{h}(0)=P_{h}x_{0}\in V_{h}
\end{split}
\end{equation}
%%%
Therefore $X_{h}:\Omega\times\T\rightarrow V_{h}$, $h\in (0,1]$, is given by
\begin{equation}\label{eq:mildXh}
X_{h}(t)=S_{h}(t)P_{h}x_{0}+\int_{0}^{t}S_{h}(t-s)P_{h}f(s)\;ds+\int_{0}^{t}S_{h}(t-s)P_{h}G(s)\;dL(s)
\end{equation}
% Let $(H,\left\langle\cdot,\cdot\right\rangle_{H})$ be a separable Hilbert space and let $A:\mathcal{D}(A)\subset H\rightarrow H$ be a linear operator satisfying
% \begin{assumption}
% $A:\mathcal{D}(A)\subset H\rightarrow H$ is densely defined, self-adjoint, positive-definite and has a compact inverse.
% \end{assumption}

The following deterministic estimate will be used in the proof of our weak error result stated in Theorem~\ref{thm:weak} below. For a proof, the reader is referred to~\cite[Theorem 3.2]{T06}. There the result is formulated under the assumption that $-A$ is the Laplace operator with homogeneous Dirichlet boundary conditions, the proof, however, can be extended to the more general setting we work in.
%%%
\begin{lemma}\label{lem:det_convergence}
Let Equation~\eqref{eq:Ritz} hold. Then there exists a constant $C>0$ such that for any $h\in(0,1]$ and $t>0$ 
\begin{equation*}
\left\|F_{h}(t)\right\|_{L(H)}\leq Ch^{2}t^{-1}.
\end{equation*}
\end{lemma}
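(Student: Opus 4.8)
The plan is to exploit the parabolic smoothing of the analytic semigroups $S$ and $S_h$ together with the elliptic error bound encoded in~\eqref{eq:Ritz}. First I would dispose of small times: since $\|S(t)\|_{L(H)}\le 1$ and $\|S_h(t)P_h\|_{L(H)}\le 1$, one has $\|F_h(t)\|_{L(H)}\le 2$ for every $t>0$, so the claimed bound holds trivially whenever $t\le h^2$ (choosing $C\ge 2$), and it suffices to treat $t>h^2$. The tools I would assemble are: the smoothing estimates $\|A^{\alpha}S(t)\|_{L(H)}\le C t^{-\alpha}$ and $\|A_h^{\alpha}S_h(t)\|_{L(V_h)}\le C t^{-\alpha}$ for $0\le\alpha\le 1$, which are immediate from the spectral representations since $\sup_{x>0}x^{\alpha}e^{-tx}=(\alpha/e)^{\alpha}t^{-\alpha}$; the consequence $\|(I-P_h)x\|_H\le\|x-R_h x\|_H\le C h^{2}\|x\|_{2}$ of~\eqref{eq:Ritz} (with $\beta=2$); and the identity $P_hA=A_hR_h$ on $\mathcal{D}(A)$, which follows directly from the definitions of $A_h$ and $R_h$.

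The cleanest route I would take is the Dunford--Taylor (resolvent) representation. Let $\Gamma$ be the boundary of the sector $\{\lambda:\lambda\neq0,\ |\arg\lambda|<\psi\}$ with $\psi\in(\pi/2,\pi)$, oriented so that $\mathrm{Im}\,\lambda$ increases along it. Since $A$ and $A_h$ are self-adjoint and positive definite with spectra contained in $[\lambda_1,\infty)$, the contour $\Gamma$ lies in the resolvent sets of $-A$ and $-A_h$, and there $\|(\lambda+A)^{-1}\|_{L(H)}$ and $\|(\lambda+A_h)^{-1}\|_{L(V_h)}$ are both bounded by $C|\lambda|^{-1}$ with $C$ independent of $h$. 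Consequently
\begin{equation*}
S(t)=\frac{1}{2\pi i}\int_{\Gamma}e^{\lambda t}(\lambda+A)^{-1}\,d\lambda,\qquad S_h(t)P_h=\frac{1}{2\pi i}\int_{\Gamma}e^{\lambda t}(\lambda+A_h)^{-1}P_h\,d\lambda,
\end{equation*}
so that $F_h(t)=\frac{1}{2\pi i}\int_{\Gamma}e^{\lambda t}E(\lambda)\,d\lambda$ with $E(\lambda):=(\lambda+A_h)^{-1}P_h-(\lambda+A)^{-1}$. The key estimate to establish is the uniform resolvent error bound
\begin{equation*}
\|E(\lambda)\|_{L(H)}\le C\min\bigl(h^{2},|\lambda|^{-1}\bigr),\qquad\lambda\in\Gamma.
\end{equation*}
Granting it, I would parametrize $\Gamma$ by the rays $\lambda=re^{\pm i\psi}$, use $|e^{\lambda t}|=e^{-r t|\cos\psi|}$, split the $r$-integral at $r=h^{-2}$, and bound each of the two pieces by $C h^{2}t^{-1}$; this yields $\|F_h(t)\|_{L(H)}\le C h^{2}t^{-1}$ for $t>h^{2}$ and completes the proof.

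The resolvent error bound is the main obstacle and the heart of the matter. For large $|\lambda|$ it is automatic from the $|\lambda|^{-1}$ decay of the two resolvents, giving $\|E(\lambda)\|_{L(H)}\le C|\lambda|^{-1}$. For moderate $|\lambda|\lesssim h^{-2}$ one argues as in the stationary case: writing $w=(\lambda+A)^{-1}g$ and $w_h=(\lambda+A_h)^{-1}P_hg$, sectoriality gives the uniform a priori bound $\|w\|_{2}=\|Aw\|_H=\|g-\lambda w\|_H\le C\|g\|_H$, and then the splitting $w_h-w=(w_h-R_hw)+(R_hw-w)$ is controlled by combining the Ritz estimate~\eqref{eq:Ritz} for $R_hw-w$ with a duality (Aubin--Nitsche) argument for $w_h-R_hw\in V_h$, both carried out uniformly in $\lambda\in\Gamma$. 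Keeping every constant independent of $\lambda$ is the delicate point, as a careless duality argument produces a spurious $\log(1/h)$ factor; this uniform elliptic analysis is exactly the content underlying~\cite[Theorem 3.2]{T06}. An alternative, purely time-domain route differentiates $s\mapsto S_h(t-s)P_hS(s)$ and uses $P_hA=A_hR_h$ to write $F_h(t)$ as the boundary term $(I-P_h)S(t)$ plus an integral of $S_h(t-s)A_h(P_h-R_h)S(s)$; this avoids complex analysis but forces one to confront the non-integrability of crude term-by-term bounds as $s\to0$ and $s\to t$, where only the cancellation intrinsic to the representation rescues the sharp power $t^{-1}$.
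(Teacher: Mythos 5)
Your argument is sound, but it is worth noting that the paper does not actually prove this lemma at all: it simply cites \cite[Theorem 3.2]{T06} and remarks that Thom\'ee's proof (an energy argument, splitting the error via the Ritz projection and integrating weighted energy identities) extends from the Dirichlet Laplacian to the present abstract self-adjoint setting. Your Dunford--Taylor route is therefore a genuinely different, self-contained alternative, and it is in fact very well adapted to the paper's hypotheses, since self-adjointness and positive definiteness give the sector-uniform resolvent bounds for $A$ and $A_h$ (with spectra in $[\lambda_1,\infty)$ uniformly in $h$) for free. One remark that simplifies your ``heart of the matter'': the uniform resolvent error bound $\|E(\lambda)\|_{L(H)}\le Ch^2$ does not require any Aubin--Nitsche duality here, because \eqref{eq:Ritz} with $\beta=2$ is already an $L^2$-estimate of optimal order. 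Using the identity $P_hA=A_hR_h$ that you established, one computes for $w=(\lambda+A)^{-1}g$ that
\begin{equation*}
(\lambda+A_h)R_hw=P_hg+\lambda\left(R_hw-P_hw\right),
\end{equation*}
hence
\begin{equation*}
E(\lambda)g=\left(R_hw-w\right)-\lambda(\lambda+A_h)^{-1}P_h\left(R_hw-w\right),
\end{equation*}
and both terms are bounded by $C\|R_hw-w\|_H\le Ch^2\|Aw\|_H\le Ch^2\|g\|_H$ using $\|\lambda(\lambda+A_h)^{-1}\|\le C$ on $\Gamma$ and your a priori bound $\|Aw\|_H\le C\|g\|_H$. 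With that, the contour estimate you describe (splitting at $|\lambda|=h^{-2}$) does close the argument, and the logarithmic loss you worry about never appears. In short: your approach is correct and completable, trades Thom\'ee's parabolic energy/duality machinery for a purely elliptic resolvent estimate plus a contour integral, and arguably delivers exactly the ``extension to the more general setting'' that the paper asserts without proof.
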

%%%
%%%
%\att{\ab{How is $C^1$ defined?}}
With this result in hand we prove our main result on weak convergence.
\begin{theorem}\label{thm:weak}
Given a continuously Fr\'echet-differentiable mapping $\phi : H \rightarrow \mathbb{R}$ with Lipschitz continuous derivative, there exists a constant $C(T)>0$, independent of $h$, such that
\begin{equation*}\label{weakerr}
|\E\left[\phi(X_{h}(T))-\phi(X(T))\right]|\leq C(T)(1+|\ln(h)|)h^{2}.
\end{equation*}
\end{theorem}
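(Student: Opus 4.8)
We want to show the weak error $|\E[\phi(X_h(T)) - \phi(X(T))]|$ decays like $h^2(1+|\ln h|)$, which is twice the strong rate $O(h(1+|\ln h|)^{1/2})$ one would expect from Lemma~\ref{lem:det_convergence}. The novelty is to avoid the Kolmogorov backward equation and instead use the Malliavin integration-by-parts formula (Proposition~\ref{prop:4.1}). Let me think about how to exploit this.

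**Setting up the error decomposition.** Both $X(T)$ and $X_h(T)$ are given by variation-of-constants formulas (\ref{eq:mild}) and (\ref{eq:mildXh}), and their difference splits into a deterministic part and a stochastic part. Write $e(T) := X_h(T) - X(T)$. The deterministic contributions (initial data and the drift $f$) involve $F_h(T-s)$ acting on fixed elements, and by Lemma~\ref{lem:det_convergence} together with $\int_0^T (T-s)^{-1}\,ds$-type bounds these give errors of order $h^2(1+|\ln h|)$ directly — these are the easy terms and contribute to the $\ln h$ factor through the time integral of $t^{-1}$. The genuinely stochastic part is $\int_0^T F_h(T-s)G(s)\,dL(s)$, and the whole point is that a naive bound on $\E[\phi(X_h(T)) - \phi(X(T))]$ via Lipschitz continuity of $\phi$ would only give the strong rate $h$. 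We must extract a second power of $h$ from the stochastic term.

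**The key Malliavin step.** The plan is to interpolate along the straight line between $X(T)$ and $X_h(T)$. Since $\phi$ is Fréchet-differentiable, write
\begin{equation*}
\E[\phi(X_h(T)) - \phi(X(T))] = \E\int_0^1 \langle \phi'(X_\theta(T)), e(T)\rangle_H\,d\theta,
\end{equation*}
where $X_\theta(T) := X(T) + \theta\, e(T)$. The deterministic pieces of $e(T)$ are handled as above. For the stochastic piece, I would apply Proposition~\ref{prop:4.1} with the pairing $F = \phi'(X_\theta(T))$ against the integrand $\Phi(s) = F_h(T-s)G(s)$, turning the expected inner product of $\phi'$ with a stochastic integral into an expectation of $\int_0^T\int_U \langle [D\phi'(X_\theta(T))](s,u),\, F_h(T-s)G(s)u\rangle_H\,\nu(du)\,ds$. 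The chain rule (\ref{eq:chain}) controls $D\phi'(X_\theta(T))$ in terms of the Lipschitz constant of $\phi'$ and $DX(T)$, $DX_h(T)$. This is where the second factor of $F_h$ enters: the Malliavin derivative of the solutions again produces terms involving $F_h(T-s)$ (through $DX_h(T) - DX(T)$, since $D$ of the stochastic integral returns the integrand-kernel $F_h(T-s)G(s)u$). Thus the integration by parts converts one Lipschitz estimate on $\phi$ into a product of \emph{two} $F_h$ factors — one from $e(T)$ itself and one from the Malliavin derivative — each contributing a power of $h$, giving $h^2$ overall.

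**Estimating and the main obstacle.** After the integration by parts, the task reduces to bounding $\int_0^T\int_U \|F_h(T-s)G(s)u\|_H \cdot \|(DX_h - DX)(T)\|_H\,\nu(du)\,ds$ and similar terms, using Lemma~\ref{lem:det_convergence} for each $F_h$-factor, the boundedness of $G$ (Assumption), the Hilbert--Schmidt embedding $U_0\hookrightarrow U$ (so that $\int_U\|\cdot\| u\|^2\,\nu(du)$ is finite by (\ref{eq:levymeas})), and Cauchy--Schwarz. The singularity $t^{-1}$ from each $F_h(T-s)$ must be integrated against a second singular factor, so one expects a convolution $\int_0^T (T-s)^{-1}(\cdots)\,ds$ producing the logarithm $|\ln h|$; the $h^{-2}$ penalty of squaring $F_h$ must be controlled by truncating the singular integral at scale $h^2$. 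I expect the principal obstacle to be making the Malliavin derivatives $DX(T)$ and $DX_h(T)$ rigorous and uniformly integrable — verifying $X(T), X_h(T)\in\D^{1,2}(\Omega;H)$ and that $\phi'(X_\theta(T))\in\D^{1,2}$ so that Proposition~\ref{prop:4.1} applies — together with carefully balancing the two $t^{-1}$ singularities so that the double time-integral yields exactly $h^2(1+|\ln h|)$ rather than a worse rate. The interpolation parameter $\theta$ complicates the Malliavin differentiability of $\phi'(X_\theta(T))$, and ensuring the requisite regularity uniformly in $\theta$ (and in $h$) will require the bulk of the technical work.
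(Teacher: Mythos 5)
Your overall skeleton is the paper's: interpolate $\phi$ along the segment between $X(T)$ and $X_h(T)$ via the mean value theorem, split the error into the initial-datum term, the drift term and the stochastic convolution $\int_0^T F_h(T-s)G(s)\,dL(s)$, dispose of the first two with Lemma~\ref{lem:det_convergence} and a truncation of $\int_0^T\|F_h(T-s)\|_{L(H)}\,ds$ at $T-h^2$ (whence the logarithm), and treat the third by the duality formula of Proposition~\ref{prop:4.1} combined with the Lipschitz chain rule~\eqref{eq:chain}. Up to that point you match the paper.

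However, your explanation of \emph{why} the stochastic term gains the rate $h^2$ contains a genuine error. You claim the integration by parts produces ``a product of two $F_h$ factors --- one from $e(T)$ itself and one from the Malliavin derivative,'' the second allegedly coming from $DX_h(T)-DX(T)$, and you then worry about ``the $h^{-2}$ penalty of squaring $F_h$.'' Neither of these occurs. After Proposition~\ref{prop:4.1} the term to bound is $\E\int_0^T\int_U\langle D\phi'(X_\sigma(T)),F_h(T-s)G(s)u\rangle_H\,\nu(du)\,ds$, and the chain rule gives $\|D\phi'(X_\sigma(T))\|_H\le C\|\sigma [DX_h(T)](s,u)+(1-\sigma)[DX(T)](s,u)\|_H$: a \emph{convex combination}, not a difference, of $[DX_h(T)](s,u)=S_h(T-s)P_hG(s)u$ and $[DX(T)](s,u)=S(T-s)G(s)u$. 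Since the semigroups are contractions and $G$ is bounded, this factor is simply $O(\|u\|_U)$ and carries no power of $h$ whatsoever. The full $h^2$ comes from the \emph{single} remaining factor $\|F_h(T-s)G(s)u\|_H\le Ch^2(T-s)^{-1}\|u\|_U$. The true mechanism by which the weak rate doubles the strong rate is that the duality formula replaces the It\^o isometry (where $\|F_h(T-s)\|^2$ appears inside the time integral, forcing the weaker estimate $\|F_h(t)\|\le Ch^\beta t^{-\beta/2}$ with $\beta<1$ for integrability) by an integral in which $\|F_h(T-s)\|_{L(H)}$ appears only to the first power, so that the optimal estimate $Ch^2t^{-1}$ of Lemma~\ref{lem:det_convergence} can be integrated up to a logarithm, with $\int_U\|u\|_U^2\,\nu(du)<\infty$ from~\eqref{eq:levymeas} absorbing the $u$-integral. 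If you executed the proof as you describe it --- hunting for a second $F_h$ in $D\phi'(X_\sigma(T))$ and balancing two $t^{-1}$ singularities --- you would not find that factor, and the convolution of two $t^{-1}$ singularities you anticipate never has to be confronted.
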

\begin{proof}
The mean value theorem yields
\begin{align*}
&|\E\left[\phi(X_{h}(T)) - \phi(X(T))\right]|\\ 
&= \left|\E\left\langle \int_{0}^{1} \phi'\left(\sigma X_{h}(T)+(1-\sigma)X(T)\right) \; d\sigma, X_{h}(T) - X(T)\right\rangle_{H}\right|\\
&\leq \int_0^{1} \left|\E\left\langle \phi'\left(\sigma X_{h}(T)+(1-\sigma)X(T)\right), F_{h}(T)x_{0}\right\rangle_{H}\right| \; d\sigma\\
&\quad+ \int_0^{1} \left|\E\left\langle \phi'\left(\sigma X_{h}(T)+(1-\sigma)X(T)\right), \int_{0}^{T} F_{h}(T-s)f(s)\;ds\right\rangle_{H}\right| \; d\sigma\\
&\quad + \int_0^{1} \left|\E\left\langle \phi'\left(\sigma X_{h}(T)+(1-\sigma)X(T)\right), \int_{0}^{T} F_{h}(T-s)G(s)\;dL(s)\right\rangle_{H}\right| \; d\sigma\\
&=: I_{h}^{1}(T)+I_{h}^{2}(T)+I_{h}^{3}(T).
\end{align*}
To the first term, we apply the Cauchy-Schwarz inequality, the fact that $\phi'$ grows at most linearly and Lemma~\ref{lem:det_convergence}, to get
\begin{align*}
I_{h}^{1}(T) &\leq \int_{0}^{1} \left\|\phi'(\sigma X_{h}(T) + (1 - \sigma)X(T))\right\|_{L^{2}(\Omega;H)} \;d\sigma \left\|F_{h}(T)x_{0}\right\|_{H}\\
&\leq C \left(1 + \left\|X_{h}(T)\right\|_{L^{2}(\Omega;H)} + \left\|X(T)\right\|_{L^{2}(\Omega;H)}\right) \left\|F_{h}(T)\right\|_{L(H)}\left\|x_0\right\|_{H}\\
&\leq C T^{-1} h^{2}.
\end{align*}
%\att{\ab{How does the constant depend on $T$?}}
Similarly, the second term is bounded by
\small
\begin{align*}
I_{h}^{2}(T) &\leq \int_{0}^{1} \left\|\phi'(\sigma X_{h}(T) + (1 - \sigma)X(T))\right\|_{L^{2}(\Omega;H)} \;d\sigma \left\|\int_{0}^{T} F_{h}(T-s)f(s)\;ds\right\|_{H}\\
&\leq C \left(1 + \left\|X_{h}(T)\right\|_{L^{2}(\Omega;H)} + \left\|X(T)\right\|_{L^{2}(\Omega;H)}\right) \int_{0}^{T} \left\|F_{h}(T - s)\right\|_{L(H)}\left\|f(s)\right\|_{H} \;ds\\
&\leq C\left(\int_{0}^{T-h^2} \left\|F_{h}(T - s)\right\|_{L(H)} \;ds+\int_{T-h^{2}}^{T} \left\|F_{h}(T - s)\right\|_{L(H)} \;ds\right)\\
&\leq C\left(h^2\int_{0}^{T-h^2}(T-s)^{-1}\;ds+h^{2}\right)\\
&\leq Ch^2\left(\ln(T)-2\ln(h) + 1\right)\\
&\leq C(T) (1 + |\ln(h)|) h^{2},
\end{align*}
\normalsize
where we used the boundedness of $f$ in the third step and Lemma~\ref{lem:det_convergence} to estimate the first summand of the third line.

It remains to estimate $I_{h}^{3}(T)$. Applying Proposition~\ref{prop:4.1}, the chain rule and the Lipschitz continuity of $\phi'$, Lemma~\ref{lem:det_convergence} and the boundedness of $G$ yields
\small
\begin{align*}
&I_{h}^{3}(T)\\
&=\int_{0}^{1}\Big|\E\int_{0}^{T}\int_{U}\left\langle D\phi'(\sigma X_{h}(T) + (1 - \sigma)X(T)),F_{h}(T-s)G(s)u\right\rangle_{H}\nu(du)ds\Big|\,d\sigma\\
&\leq C \int_{0}^{1} \E\Big[\int_{0}^{T}\int_{U} \left\|\sigma[DX_{h}(T)](s,u) + (1 - \sigma)[DX(T)](s,u)\right\|_{H} \\
&\qquad\qquad\qquad\qquad\qquad\qquad\qquad\qquad\qquad\qquad\qquad\left\|F_{h}(T - s)G(s)u\right\|_{H}\nu(du) ds\Big]d\sigma\\
&\leq C \E\Big[\int_{0}^{T}\int_{U} \left(\left\|[DX_{h}(T)](s,u)\right\|_{H} + \left\|[DX(T)](s,u)\right\|_{H}\right) \left\|F_{h}(T - s)G(s)u\right\|_{H}\nu(du) ds\Big]\\
&= C \int_{0}^{T}\int_{U} \left(\left\|S_{h}(T - s)P_{h}G(s)u\right\|_{H} + \left\|S(T - s)G(s)u\right\|_{H}\right) \left\|F_{h}(T - s)G(s)u\right\|_{H}\nu(du) ds\\
&\leq C\int_{U} \left\|u\right\|_{U}^{2}\;\nu(du)\int_{0}^{T} \left\|F_{h}(T - s)\right\|_{L(H)}\;ds\\
&\leq C(T) (1 + |\ln(h)|) h^{2},
\end{align*}
\normalsize
where the last estimate has already been used in the bound of $I_{h}^{2}(T)$. Summing the estimates proves then the assertion.
\end{proof}

\section*{References}

\bibliographystyle{model1b-num-names}%abbrv,elsarticle-num, 
\bibliography{literature}

\end{document}